\newcommand{\parr}[1]{\left (#1\right )}
\newcommand{\brac}[1]{\left [#1\right ]}
\newcommand{\ip}[1]{\left \langle #1 \right \rangle }
\newcommand{\Real}{\mathbb R}
\newcommand{\bbone}{\mathbbm 1}
\renewcommand{\P}{\mathcal{P}}
\newcommand{\argmin}{\textrm{argmin}}
\def \argmin{\mathrm{argmin}}
\def \RS{\mathrm{RS}} %row stochastic
\def \DS{\mathrm{DS}} %doubly stochastic
\def \CS{\mathrm{CS}} %column stochastic
\def \JAP{\mathrm{JAP}} %JA polytope
\def \OLP{\mathrm{OLP}} %one sided local polytope
\def \P{\mathcal{P}}
\newtheorem{theorem}{Theorem}
\newtheorem{lemma}{Lemma}
\newcommand{\eg}{{\it e.g.}}
\newcommand{\ie}{{\it i.e.}}
\title{Sinkhorn Algorithm for Lifted Assignment Problems}
\author{Yam Kushinsky}
\author{Haggai Maron}
\author{Nadav Dym}
\author{Yaron Lipman}
\affil{Weizmann Institute of  Science}
\date{}
\begin{document}

% \nipsfinalcopy is no longer used

\maketitle

%------------------------------------------------
\begin{abstract}
Recently, Sinkhorn's algorithm was applied for approximately solving linear programs emerging from optimal transport very efficiently \cite{cuturi2013sinkhorn}. This was accomplished by formulating a regularized version of the linear program as Bregman projection problem onto the polytope of doubly-stochastic matrices, and then computing the projection using the efficient Sinkhorn algorithm, which  is based on alternating closed-form Bregman projections on the larger polytopes of row-stochastic and column-stochastic matrices.

In this paper we suggest a generalization of this algorithm for solving a well-known lifted linear program relaxations of the Quadratic Assignment Problem (QAP), which is known as the Johnson Adams (JA) Relaxation. First, an efficient algorithm for Bregman projection onto the JA  polytope by alternating closed-form Bregman projections onto one-sided local polytopes is devised. The one-sided polytopes can be seen as a high-dimensional, generalized version of the row/column-stochastic polytopes. 
Second, a new method for solving the original linear programs using the Bregman projections onto the JA polytope is developed and shown to be more accurate and numerically stable than the standard approach of driving the regularizer to zero. 
The resulting algorithm is considerably more scalable than standard linear solvers and is able to solve significantly larger linear programs. \vspace{-6pt}
\end{abstract}

%------------------------------------------------
\section{Introduction} \vspace{-6pt}
% !TEX root = main_sinkhorn_lifted.tex

The popular Sinkhorn algorithm \cite{kosowsky1994invisible,cuturi2013sinkhorn} for optimal transport problems solves optimal transport problems extremely efficiently, at the price of a minor modification of the energy to be minimized which takes the form of an entropic regularization term. The regularized optimal transport problem can be phrased as the problem of computing the Bregman projection of a matrix onto the optimal transport polytope.  The Sinkhorn algorithm represents the optimal transport polytope as an intersection of two polytopes for which the Bregman projection has a simple closed-form solution, and then iteratively computes these projections in an alternating fashion. This results in a provably convergent algorithm for regularized optimal transport problems that is significantly more scalable than generic linear programming (LP) solvers.

In this paper we propose a Sinkhorn-type algorithms for the famous Johnson-Adams linear relaxation of the Quadratic Assignment Problem (QAP) . The QAP as introduced in Lawler \cite{lawler1963quadratic} is the problem of finding a bijection between the $n$ vertices of two graphs minimizing a quadratic energy.  Two well-known subproblems of the QAP are the traveling salesman problem and the Koopmans-Beckmann quadratic assignment problem. Approximately solving either one of these subproblems is known to be NP-hard in general \cite{QAPnpHardToApprox}. The popular Johnson-Adams (JA)  relaxation \cite{adams1994improved} for the QAP is an LP relaxation defined in a \emph{lifted} high dimensional variable space  with $O(n^4)$ variables and constraints. As a result they are often too big to solve with generic (\eg, interior point) LP solvers. We represent the \emph{Johnson Adams polytope} (JAP) as an intersection of four polytopes which we call one-sided local polytopes. We show that  computing Bregman projections onto a one-sided local polytope has an easily computable \emph{closed-form solution}. The time complexity of computing this closed-form solution is \emph{linear} in the size of the data. Based on this observation and the fact that the JAP is the intersection of four one-sided local polytopes, we propose an efficient, provably convergent Sinkhorn-type algorithm for computing Bregman projections onto the JAP, by iteratively solving one-sided problems. 

Once we have an efficient algorithm for Bregman projection onto the JAP, we can use this algorithm to optimize linear energies over these polytopes. At this point we abandon the standard regularization approach used by the Sinkhorn algorithm, and suggest an alternative process for iteratively using Bregman projections for solving the original LP. The resulting algorithm for solving the original LP is more accurate and numerically robust than the standard entropy regularization approach. 

We provide numerical experiments validating our algorithm on the standard QAP benchmark \cite{burkard1997qaplib} achieving slightly inferior results to the best known lower-bounds for these problems. We note that these best lower-bounds were achieved with a plethora of different techniques including combinatorial algorithms with exponential worst-case time complexity. We further apply our algorithm to three "real-life" anatomical datasets of bones \cite{boyer2011algorithms} demonstrating state of the art classification results, improving upon previous works and providing better classification than human experts in all but one (almost comparable) instance.

%------------------------------------------------
\section{Related work}
% !TEX root =  nips_2016.tex

\paragraph*{Quadratic assignment problems}
Convex relaxations are a common strategy for dealing with the hardness of the QAP. Small-medium instances of the QAP ($n<30$) can be solved using branch and bound algorithms which use convex relaxations to obtain lower bounds \cite{QAPsurvey}. For larger problems the non-integer solution obtained from the relaxation is rounded to  obtain a feasible (generally suboptimal) solution for the QAP. Examples include  spectral relaxations \cite{rendl1992applications,leordeanu2005spectral} and quadratic programming relaxations over the set of doubly stochastic matrices \cite{anstreicher2001new,zaslavskiy2009path,fogel2013convex}. Lifting methods, in which auxiliary variables that represent the quadratic terms are introduced, provide linear programming (LP) relaxations \cite{adams1994improved} or semi-definite programming relaxations \cite{zhao1998semidefinite,Itay} which are often more accurate than the former methods. For example for certain classes of the QAP the worst case error of the  LP relaxations can be bounded by a multiplicative constant of $\approx 3.16$ \cite{nagarajan2009maximum}. The disadvantage of lifting methods is that they solve convex problems with $n^4$ variables in contrast with the cheaper spectral and quadratic programming methods that solve problems with $n^2$ variables. As a result, lifting methods cannot be solved using generic convex programming solvers for $n>20$. It is also possible to construct relaxations with $n^{2k}$, $k>2$ variables to achieve even tighter relaxations \cite{laurent2003comparison,adams2007level,hahn2012level} at an increased computational price.

The authors of \cite{adams1994improved,karisch1999dual} suggest to deal with the computational complexity of the large JA linear program by using a greedy coordinate ascent algorithm to solve the dual LP. This algorithm is not guaranteed to converge to the global minimum of the JA relaxations. The authors of \cite{rendl2007bounds} propose a specialized solver for a lifted SDP relaxation of QAP, and the authors of \cite{burer2006solving} propose a converging  algorithm for the JA and SDP relaxations. However both algorithms can only handle quadratic assignment instances with up to 30 points. More on the QAP can be found in surveys such as \cite{QAPsurvey}.

\paragraph*{Entropic regularization} The successfulness of entropic regularization for optimal transport linear programs has motivated research aimed at extending this method to other optimization problems. In \cite{rangarajan1996novel,rangarajan1997convergence,Justin} it is shown that regularized quadratic energies over positive matrices with fixed marginal constraints can be solved efficiently by solving a sequence of regularized optimal transport problems. Cuturi et al. \cite{cuturi2014fast} compute Wasserstein barycenters using entropic regularization. Benamou et al. \cite{benamou2015iterative} also consider Wasserstein barycenters as well as  several other problems for which entropic regularization can be applied. One of these problems is the multi-marginal optimal transport which is related to the JA linear program, although the latter is more complex as the marginals in the JA linear program are themselves variables constrained by certain marginal constraints.

%------------------------------------------------
\section{Approach}
% !TEX root = main_sinkhorn_lifted.tex
\subsection{Problem statement}
The \emph{quadratic assignment problem} (QAP) is the problem of minimizing a quadratic energy over the set $\Pi=\Pi(n)$ of permutation matrices of dimension $n$:
\begin{equation}\label{e:qap}
\min_{x\in \Pi}  \quad \sum_{ij} \theta_{ij} x_{ij} + \sum_{ijkl}\tau_{ijkl}x_{ij}x_{kl}.
\end{equation}

One common and powerful approximation to the solution of \eqref{e:qap} is achieved via an LP relaxation in a lifted space. That is, \eqref{e:qap} is relaxed by replacing quadratic terms $x_{ij}x_{kl}$ with new auxiliary variables $y_{ijkl}$ to obtain
\begin{equation}\label{e:perm_lp}
\min_{(x,y)\in C}  \quad \sum_{ij} \theta_{ij} x_{ij} + \sum_{ijkl}\tau_{ijkl}\,y_{ijkl},
\end{equation}
where $C$ is the \emph{Johnson Adams polytope} (JAP) which is a convex relaxation of $\Pi$ in the lifted $(x,y)$ space:
\begin{subequations}\label{e:JA}
	\begin{align}\label{e:JA_1}
	\sum_j x_{ij} &= 1, \qquad \ \ \ \forall i \\
	\sum_i x_{ij} &= 1, \qquad \ \ \ \forall j \\ \label{e:JA_3}
	\sum_l y_{ijkl} &= x_{ij}, \qquad \forall i,j,k \\ 
	\sum_k y_{ijkl} &= x_{ij}, \qquad \forall i,j,l \\ \label{e:JA_5}
	\sum_j y_{ijkl} &= x_{kl}, \qquad \forall i,k,l \\
	\sum_i y_{ijkl} &= x_{kl}, \qquad \forall j,k,l \\ \label{e:JA_geq_0}
	x,y &\geq 0.
	\end{align}
\end{subequations}

Here $x\in\Real^{n\times n}$, $y\in\Real^{n^2\times n^2}$. It is indeed a relaxation of $\Pi$ since every  permutation $x$ satisfies $(x,y)\in \JAP$ for $y_{ijkl}=x_{ij}x_{kl}$.
For notational convenience we let $d=n^2+n^4$ and denote $(x,y)\in\Real^d$.\\

\subsection{Sinkhorn's algorithm}
Our goal is to construct efficient algorithms for solving the JA relaxation. Our method is motivated by the successfulness of the highly scalable Sinkhorn algorithm \cite{kosowsky1994invisible,cuturi2013sinkhorn} in (approximately) solving optimal transport problems. We begin by reviewing the key ingredients of the Sinkhorn algorithm and then explain how we generalize it to higher order LP relaxations, and the modifications we suggest for improving convergence.

To solve optimal transport\ (OT) problems efficiently, it is suggested in 
\cite{bregman1967relaxation,cuturi2014fast,benamou2015iterative} to add an entropic regularizer to the OT problem:
\begin{equation}\label{e:OT_entropy}
\min_{x\in \DS}  \quad \ip{\theta,x} + \beta^{-1} \sum_{ij} x_{ij} \parr{\log x_{ij} - 1},
\end{equation}
where $\beta$ is some large positive number, and  $\DS=\DS(\mu,\nu)\subset \Real^{n\times n}_{\scriptscriptstyle{ \geq 0}}$ is the set of non-negative $n\times n$ matrices with specified positive marginals $\mu,\nu \in \Real_{\scriptscriptstyle{ > 0}}^n $: 
\begin{subequations}\label{e:DS}
	\begin{align} \label{e:DS_RS}
	& \quad \sum_j x_{ij}=\mu_i, \quad \forall i\\ \label{e:DS_CS}
	& \quad \sum_i x_{ij}=\nu_j, \quad \forall j\\ \label{e:DS_geq}
	& \quad x_{ij}\geq 0, \quad \forall i,j
	\end{align}
\end{subequations}

Adding the entropy to the energy has several benefits: First, it allows writing the energy as a Kullback-Leibler divergence w.r.t.~some $z\in\Real^{n\times n}_{\scriptscriptstyle{ > 0}}$, 
\begin{equation}\label{e:kl}
\min_{x\in \DS} \quad KL(x \vert z),
\end{equation}
where $KL(x\vert z) = \sum_{ij} x_{ij} \parr{\log\frac{x_{ij}}{z_{ij}} -1}$ is the KL divergence. This turns \eqref{e:OT_entropy} into an equivalent  KL-projection problem. Secondly, it makes the energy strictly convex. Thirdly,  since the entropy's derivative explodes at the boundary of $\DS$~it serves as a barrier function which ensures that the inequality constraints \eqref{e:DS_geq} are never active, resulting in significant simplification of the KKT equations for \eqref{e:DS}; Finally, due to this simplification,  the KL-projection over the row-stochastic matrices $\RS(\mu)$ defined by \eqref{e:DS_RS}, and column-stochastic matrices $\CS(\nu)$ defined by \eqref{e:DS_CS} has a closed form solution:

\begin{theorem}\label{thm:kl_projection_rs}
	Given $z\in \Real_{\scriptscriptstyle{ > 0}}^{n\times n}$, the minimizer of
	\begin{equation}\label{e:kl_rs}
	\min_{x\in \RS(\mu)} \quad KL(x \vert z),
	\end{equation}
	is realized by the equation
	\begin{equation}\label{e:KL_projection_RS}
	x_{ij}^*=\frac{z_{ij}}{\sum_s z_{is}} \mu_i,
	\end{equation}
	that is the row normalized version of $z$. Simliarly the projection onto $\CS$ is the column normalized version of $z$.% $ \diag (z\bbone)^{-1}z$.
\end{theorem}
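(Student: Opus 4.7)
The plan is to attack this via Lagrange multipliers, exploiting three structural features of \eqref{e:kl_rs}. First, the KL divergence is strictly convex in $x$ on $\Real_{>0}^{n\times n}$, guaranteeing uniqueness of the minimizer. Second, both the objective $KL(x\vert z)$ and the row-sum constraints decompose row by row, so the full problem splits into $n$ independent one-row minimizations. Third, as the paragraph preceding the theorem emphasizes, the barrier-like behavior of the entropy lets us drop the non-negativity constraints \eqref{e:DS_geq} and treat the problem as one with only equality constraints. Once this is established the result follows from a short first-order calculation.

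Concretely, after fixing a row $i$, I would form the Lagrangian
\[
L(x_i,\lambda_i) \;=\; \sum_j x_{ij}\parr{\log\frac{x_{ij}}{z_{ij}}-1} \;+\; \lambda_i\parr{\sum_j x_{ij}-\mu_i},
\]
and set $\partial L/\partial x_{ij} = \log(x_{ij}/z_{ij}) + \lambda_i = 0$, giving $x_{ij}^* = z_{ij}e^{-\lambda_i}$. Plugging this into the row-sum constraint $\sum_j x_{ij}^* = \mu_i$ immediately yields $e^{-\lambda_i} = \mu_i/\sum_s z_{is}$, and substituting back produces the claimed closed form \eqref{e:KL_projection_RS}. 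Strict convexity of the objective over the open positive orthant, together with the fact that this stationary point lies in the (relative) interior, then upgrades it to the unique global minimizer of \eqref{e:kl_rs}. The projection onto $\CS(\nu)$ follows by the same argument with the roles of the row and column indices swapped.

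The only mildly delicate step I anticipate is justifying rigorously that the non-negativity constraints never bind, so that the purely-equality Lagrangian analysis is legitimate. Existence of a minimizer is clear since the feasible set is compact (a scaled product of simplices) and the objective is continuous on it under the convention $0\log 0 = 0$. For optimality in the interior, I would argue that if some $x_{ij}=0$ at a feasible $x$ then, because $\mu_i>0$, there exists $j'$ with $x_{ij'}>0$, and the one-sided directional derivative of the objective along the feasible direction that increases $x_{ij}$ while decreasing $x_{ij'}$ equals $\lim_{x_{ij}\to 0^+}\bigl[\log(x_{ij}/z_{ij}) - \log(x_{ij'}/z_{ij'})\bigr] = -\infty$, so no boundary point can be optimal. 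With this barrier argument in hand, the rest of the proof is routine stationarity, and I do not expect any real obstacle.
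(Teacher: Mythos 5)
Your proposal is correct and follows essentially the same route as the paper, which simply states that the theorem "is proved by directly solving the KKT equations" and relies on the barrier behavior of the entropy (discussed just before the theorem) to deactivate the non-negativity constraints. Your row-by-row Lagrangian computation and the directional-derivative argument at the boundary are exactly the details the paper leaves implicit, and both check out.
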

%Note that since the projection \eqref{e:kl_rs_solution} is in particular positive, it is a minimizer to the non-negaitve $\RS$ matrices \eqref{e:XXX}

The theorem  is proved by directly solving the KKT equations of \eqref{e:kl_rs} (see \eg~\cite{benamou2015iterative}). These observations are used to construct an efficient algorithm to approximate the solution of the regularized OT problem \eqref{e:OT_entropy} by repeatedly solving KL-projections on $\RS(\mu)$ and $\CS(\nu)$. As proved in \cite{bregman1967relaxation} this converges to the minimizer of \eqref{e:OT_entropy}. 

Following \cite{benamou2015iterative}, we note that the Sinkhorn algorithm is an instance of the Bregman iterative projection method that allows solving KL-projection problems over intersection of affine sets $C_1,C_2,\ldots,C_N$,
\begin{subequations}\label{e:KL}
	\begin{align} \label{e:KL_e}
	\min_{x \geq 0} &\quad  KL(x \vert z)\\
	\mathrm{s.t.} & \quad x\in C_1\cap C_2\cap\dots\cap C_N
	\end{align}
\end{subequations}
via alternate KL-projections on the sets $C_k$, that is
\begin{subequations}\label{e:bergman}
	\begin{align}
	x_0 & = z\\
	x_n & = \argmin_{x\in C_{\mathrm{mod}(n-1,N)+1}}KL(x\vert x_{n-1}), \quad n\geq 1
	\end{align}
\end{subequations}
In \cite{bregman1967relaxation} it is shown that this procedure is guaranteed to converge, under the conditions that: (i) the feasible set of \eqref{e:KL}, $C=\cap_i C_i$ contains a vector whose entries are all strictly positive , \ie, it is \emph{strictly feasible}; and (ii) All entries of the minimizer of \eqref{e:KL_e} over each $C_i$ are strictly positive . In fact, in the case of KL-divergence (in contrast to the general Bregman divergence dealt with in \cite{bregman1967relaxation}), condition (ii) can be proved from (i) using the fact that the derivatives of the KL-divergence blow-up at the boundary of the set defined by $x\geq 0$. 
Lastly, condition (i) is satisfied in all the problems we discuss in this paper. For example, $\DS$ contains a feasible interior point $x=\frac{1}{n}\mu \nu^T >0$. 

\subsection{Approach}
Our approach for solving lifted assignment problems is based on two main components: 
The first component is an efficient computation of KL projections onto the lifted JA polytope using alternating projections. While Bregman iterations can always be used to solve problems of the form \eqref{e:KL}, the performance of the method greatly depends on the chosen splitting of the feasible convex set $C$ into convex subsets $C_i$, $i=1,\ldots, N$. Generally speaking a good splitting will split $C$ into a small number of sets, where the KL-projection on every set is easy to compute. The successfulness of the optimal transport solution can be attributed to the fact that the feasible set $C=\DS(\mu,\nu)$ is split into only two sets $C_1=\RS(\mu)$, $C_2=\CS(\nu)$, and the projection onto each one of these sets has a closed form solution. We will use Bregman iterations  to approximate the solution of the JA relaxation of the QAP. We split the feasible sets of these relaxations into four sets, so that the projection on each one of these sets has a closed-form solution. For comparsion, note that the standard alternating type method for the Johnson Adams relaxation needs to solve multiple  linear programs in $n^2$ variables in each iteration \cite{adams1994improved,karisch1999dual,rostami2014revised} instead of computing closed form solution for each iteration, and is not guaranteed to converge. Our algorithm for computing KL-projections onto lifted polytopes is described in Section~\ref{sec:KLontoLifted}.

The second component of our approach is using the KL projections onto the lifted JA polytope  for approximating the solution of the linear program \eqref{e:perm_lp}. The approximation provided by the standard Sinkhorn algorithm described above is known to be suboptimal since in practice the parameter $\beta$ in \eqref{e:OT_entropy} cannot be chosen to be very large due to numerical instabilities. We propose an alternative method for approximating the solutions of the linear program by iteratively solving a number of KL-projection problems. We find that this method gives a good approximation of the solution of the linear program in a small number of iterations. This method is discussed in Section~\ref{sec:lin2KL}.

%Our approach consists of two main components: (i) Establishing a strategy for using  KL projection problems to solve the linear programs. The KL projection problems are obtained from the original LPs by replacing the linear objective with the KL divergence.   (ii) Providing efficient methods for solving the KL projection problems. We will discuss the first component in the following subsection, and then discuss the second component in Subsection~\ref{sub:KL}.

%------------------------------------------------
\section{KL-Projections onto lifted polytopes}\label{sec:KLontoLifted}
% !TEX root = main_sinkhorn_lifted.tex
We consider the problem of minimizing 
$$ KL(x\vert z) +KL( y \vert w),$$ 
where $x,z\in\Real^{n\times n}$ and $y,w\in\Real^{n^2\times n^2}$, $w,z>0$, over the  JAP using alternating Bregman iterations. The main building block in this algorithm is defining the
  \emph{one-sided local polytope} ($\OLP$):
\begin{subequations}\label{e:olp}
	\begin{align}
	\sum_j x_{ij} &= 1, \qquad \ \ \ \forall i \\
	\sum_l y_{ijkl} &= x_{ij}, \qquad \forall i,j,k \label{e:olp_Yconstraints}\\
	x,y & \geq 0          
	\label{e:local_poly_positivity}
	%x,y &\geq 0
	\end{align}
\end{subequations}
and observing that the $\JAP$ is an intersection of four sets, which are, up to permutation of coordinates, $\OLP$ sets: 
Denote $y_{ijkl}^\diamond=y_{klij}$ and define $\OLP^\diamond$ as the set of $(x,y)$ satisfying $(x,y^\diamond)\in\OLP$. Next denote $y^T_{ijkl}=y_{jilk} $ and define $\OLP^T $ to be the set of $(x,y)$ satisfying $(x^T,y^T) \in \OLP $. Denote by $\OLP^{T\diamond}$ the set of $(x,y)$  satisfying $(x^T,(y^T)^\diamond) \in \OLP $. Then we obtain
\begin{equation}\label{e:TLPnTLPTn_decomp}
\JAP=\OLP\cap\OLP^\diamond\cap \OLP^T\cap \OLP^{T\diamond}.
\end{equation}
We show that there is a closed form formula for the KL-projection onto the OLP polytope. 
The derivation of this formula will be presented in the next subsection. Thus by applying
 Bregman iterations iteratively to the four OLP sets as in \eqref{e:bergman}, we  are guaranteed to converge to the KL-projection onto the lifted polytope, providing that the JAP is strictly feasible. This is indeed the case; an example of a strictly feasible solution is $x=\frac{1}{n} \bbone \bbone^T, y=\frac{1}{n^2} \bbone \bbone^T$  where $\bbone$ denotes the vector of all ones in the relevant dimension.
\subsection{KL-Projections onto the one-sided local polytope}

We now compute the closed-form solution for KL-projections over the one-sided local polytope ($\OLP$) defined in \eqref{e:olp}. Namely for given $(z,w)\in\Real^d_{\scriptscriptstyle{ > 0}}$ we seek to solve 
\begin{equation}\label{e:kl_olp}
\min_{(x,y)\in \text{OLP}} KL(x \,\vert\, z) +  KL(y  \, \vert \,w),
\end{equation}
 % Although somewhat surprising, the solution to \eqref{e:kl_olp} can be obtained by the observation \nd{move intuition into the proof?} that the structure of the $\OLP$ resembles $\RS$ in the sense that both $x$ and each $u_{kl}=y_{ijkl}$ (with fixed $i,j$) are in $\RS$. Furthermore, for a fixed variable $x$, problem \eqref{e:kl_olp} decomposes to $n\times m$ independent KL-projections on RS of the form \eqref{e:kl_rs}:

\begin{theorem}\label{thm:KL_projection_LP1}
Given $(z,w)\in \Real^d_{\scriptscriptstyle{ > 0}}$, the minimizer of \eqref{e:kl_olp} is given by the equations:
\begin{subequations}\label{e:kl_olp_solution}
\begin{align}
q_{ij}&=\exp \parr{\frac{\sum_k \log (\sum_s w_{ijks}) +  \log z_{ij}}{n+1}} \\
x_{ij} & = \frac{q_{ij}}{\sum_j q_{ij}} \\
y_{ijkl} &= x_{ij}\frac{w_{ijkl}}{\sum_s w_{ijks}}
\end{align}
\end{subequations}
\end{theorem}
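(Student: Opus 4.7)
The plan is to derive the closed form by writing the KKT conditions for \eqref{e:kl_olp} and solving them explicitly. Since $(z,w) > 0$ and the $\OLP$ is strictly feasible (e.g.\ by the uniform point), the barrier behaviour of $\log$ at zero forces any minimizer to have $x,y > 0$, so the inequality constraints \eqref{e:local_poly_positivity} can be ignored in the Lagrangian. Together with strict convexity of the KL objective this also guarantees uniqueness, so it will suffice to exhibit any stationary point satisfying the equality constraints.

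First I would introduce Lagrange multipliers $\alpha_i$ for the row constraint $\sum_j x_{ij}=1$ and $\beta_{ijk}$ for the coupling constraint $\sum_l y_{ijkl}=x_{ij}$. Differentiating the Lagrangian in $y_{ijkl}$ gives $\log(y_{ijkl}/w_{ijkl}) = \beta_{ijk}$, hence
\begin{equation*}
y_{ijkl}=w_{ijkl}\,e^{\beta_{ijk}}.
\end{equation*}
Plugging this into the coupling constraint $\sum_l y_{ijkl}=x_{ij}$ eliminates $\beta_{ijk}$: $e^{\beta_{ijk}}=x_{ij}/\sum_s w_{ijks}$, which already yields the stated formula $y_{ijkl}=x_{ij}\, w_{ijkl}/\sum_s w_{ijks}$.

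Next I would differentiate in $x_{ij}$ to get $\log(x_{ij}/z_{ij}) - \alpha_i - \sum_k \beta_{ijk} = 0$ (signs depending on the Lagrangian convention). Substituting the expression for $\beta_{ijk}$ obtained above yields a relation of the form
\begin{equation*}
\log x_{ij} \;=\; \log z_{ij} + \alpha_i + \sum_k \bigl(\log x_{ij} - \log \textstyle\sum_s w_{ijks}\bigr),
\end{equation*}
which after collecting the $n$ copies of $\log x_{ij}$ on the left becomes
\begin{equation*}
(n+1)\log x_{ij} \;=\; \log z_{ij} + \sum_k \log \textstyle\sum_s w_{ijks} + \alpha_i.
\end{equation*}
Exponentiating and taking the $(n+1)$-th root recovers $x_{ij}=q_{ij}\,e^{\alpha_i/(n+1)}$ with $q_{ij}$ as defined in the theorem, and the only remaining unknown is $\alpha_i$.

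Finally I would determine $\alpha_i$ from the row constraint $\sum_j x_{ij}=1$, which gives $e^{\alpha_i/(n+1)}=1/\sum_j q_{ij}$ and hence the stated formula $x_{ij}=q_{ij}/\sum_j q_{ij}$. The resulting $(x,y)$ is strictly positive, satisfies both equality constraints by construction, and is the unique minimizer by strict convexity of KL. The main routine obstacle here is simply carefully isolating $\log x_{ij}$ from the coupled stationarity equations, which is what produces the characteristic $(n+1)$-th root in the definition of $q_{ij}$; everything else is direct algebra.
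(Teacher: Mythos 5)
Your proof is correct, but it takes a different route from the paper. The paper never writes down the full Lagrangian of \eqref{e:kl_olp}: it instead minimizes over $y$ for fixed $x$ by recognizing each block $u_{kl}=y_{ijkl}$ as a row-stochastic projection problem and invoking Theorem~\ref{thm:kl_projection_rs}, then substitutes the resulting $y$ back into the objective and uses the aggregation identity $\sum_k a_k KL(x\,\vert\, b_k)=\bigl(\sum_k a_k\bigr)KL\bigl(x\,\big\vert\,\exp\bigl(\sum_k a_k\log b_k/\sum_k a_k\bigr)\bigr)$ to recognize the reduced problem in $x$ as yet another instance of Theorem~\ref{thm:kl_projection_rs}. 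That route buys modularity (the one-dimensional projection result is proved once and reused, and the $(n+1)$ emerges as the total weight $1+\sum_k 1$ in the aggregation identity), at the cost of needing the slightly non-obvious KL identity. Your direct KKT solve is more self-contained and arguably more transparent about where the $(n+1)$-th root comes from (collecting $n$ copies of $\log x_{ij}$ from the eliminated multipliers $\beta_{ijk}$ plus the one from the $KL(x\vert z)$ term), and you correctly handle the two points that make the KKT approach legitimate: the barrier behaviour of the entropy forces strict positivity so the inequality multipliers vanish, and strict convexity gives uniqueness. One small internal inconsistency to fix: with the stationarity condition as you displayed it, $\log x_{ij}=\log z_{ij}+\alpha_i+\sum_k\beta_{ijk}$, substituting $\beta_{ijk}=\log x_{ij}-\log\sum_s w_{ijks}$ and collecting terms would give a coefficient of $(1-n)$ on $\log x_{ij}$, not $(n+1)$; the correct convention (the one consistent with writing the constraint as $\sum_l y_{ijkl}-x_{ij}=0$, so that $x_{ij}$ enters with a minus sign) is $\log x_{ij}=\log z_{ij}+\alpha_i-\sum_k\beta_{ijk}$, which then yields your $(n+1)\log x_{ij}$ equation and the stated formula. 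You flagged the sign ambiguity yourself, and the final formulas are right, so this is a notational slip rather than a gap.
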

\begin{proof}
The proof is based on two applications of Theorem \ref{thm:kl_projection_rs}.
First, we will find the optimal $y$ for any fixed $x$. Indeed, fixing $x$ decomposes \eqref{e:kl_olp} into $n\times n$ independent problems, one for each pair of indices $i,j $ in \eqref{e:olp_Yconstraints}. Each independent problem can be solved using the observation that the matrix  $u_{kl}=y_{ijkl}$ is in $\RS(\mu)$ where $\mu$ is the constant vector $\mu=x_{ij}\bbone$, where $\bbone$ denotes the vector of ones.  Thus using Theorem~\ref{thm:kl_projection_rs} 
$$y_{ijkl}=u_{kl} = x_{ij}\frac{w_{ijkl}}{\sum_s w_{ijk s}}.$$
Now we can plug this back in \eqref{e:kl_olp} and end up with a problem in the variable $x$ alone. Indeed, 
\begin{align*}
 KL(x\,\vert\, z)+KL(y\,\vert\, w) &=\sum_{ij} \brac{  KL(x_{ij} \, \vert \, z_{ij} )+ \sum_{kl} \frac{w_{ijkl}}{\sum_s w_{ijk s}} KL\Big(x_{ij} \, \Big\vert \, \sum_s w_{ijk s} \Big)} \\
&=(n+1)\sum_{ij} KL \Bigg( x_{ij} \, \Bigg \vert\, \exp\parr{\frac{
                \log z_{ij} + \sum_k \log (\sum_s w_{ijks})}{n+1}}   \Bigg),
\end{align*}
where in the second equality we used the following (readily verified) property of KL-divergence $a_k,b_k>0$: 
$$\sum_{k} a_{k} KL(x\,\vert\, b_{k}) = \big(\sum_{k}a_{k}\big) KL \parr{x \, \Big\vert \exp \parr{\frac{\sum_k a_k \log b_k}{\sum_k a_k}} }.$$
Finally, we are left with a single problem of the form \eqref{e:kl_rs} and applying Theorem \ref{thm:kl_projection_rs} again proves \eqref{e:kl_olp_solution}.
\end{proof}

\paragraph{Incorporating zeros constrains} The JA relaxation stated above can be strengthened by noting that for  permutations $x\in \Pi$ there exists exactly one non-zero entry in each row and column and therefore $x_{ij}x_{il}=0$ and $x_{ji}x_{li}=0 $ for all $j\ne l$. In the lifted LP formulation this implies $y_{ijil}=0$ and $y_{jili}=0 $ for all $i$ and $j\ne l$.  These constraints (which are sometimes called \emph{gangster constraints}) are part of the standard JA relaxation. They  can be incorporated seamlessly in our algorithm as we will now explain.

Denote multi-indices of $y$ by $\gamma$  and let $\Gamma$ be the set of multi-indices $\gamma$ for which the  constraint $y_\gamma=0 $ is to be added. We eliminate the zero valued variables from  the objective \eqref{e:perm_lp} and the constraints defining the polytope $C$,  and rewrite them as optimization problem in the variables $x$ and $(y_\gamma)_{\gamma \not \in \Gamma} $. We then consider $KL$-projections only with respect to these  variables, and use the same Bregman iteration scheme described above for the reduced variables. The only modification needed to the algorithm is a minor modification to the formula \eqref{e:kl_olp_solution}, where $w$ is replaced with $\bar w$ which satisfies $\bar w_\gamma =0$ if $\gamma \in \Gamma$ and $\bar w_\gamma=w_\gamma $ otherwise. 

 We note that also with respect to the reduced variables the strengthened relaxations are strictly feasible so that the alternating KL-projection algorithm converges. An example of a strictly feasible solution in the $\JAP$ being
$$\quad (x,y)=\frac{1}{|\Pi |} \sum_{x \in \Pi} (x,y(x)), $$    
where $y(x)$ is defined via $y_{ijkl}=x_{ij}x_{kl}$.

\section{From linear programs to KL projections}\label{sec:lin2KL}

The JA relaxation of the  QAP, and in fact all linear programs,  are  of the general form
\begin{equation} \label{e:linear}
	\min_{v \in \P} \quad c^Tv
\end{equation}
where  $\P$ is a standard polytope
$$\P=\{v\ \vert \   v \geq 0, \ Av=b\}.$$
containing a strictly feasible solution. We want to approximate a solution of the linear program using KL-projections onto $\P$.  
The most common strategy for doing this (\eg, \cite{bregman1967relaxation,cuturi2013sinkhorn,benamou2015iterative}) which we already described above, is  regularizing  \eqref{e:linear} by adding a KL-divergence term with a small coefficient $\beta^{-1}$ and solving 
\begin{equation} \label{e:regularized}
v_\beta^*= \mathrm{arg}\min_{v \in \P} c^Tv+\beta^{-1} KL(v|u_0)=\mathrm{arg}\min_{v \in \P}KL(v|u_0 \cdot \exp(-\beta c)) \end{equation}   	
Here $u_0$ is some constant positive vector, often chosen as $u_0=\bbone$, and $\cdot$ denotes elementwise multiplications. As our notation suggests, these regularized problems are strictly convex and hence have a unique minimizer $v_\beta^*$, which converges in the limit $\beta\rightarrow \infty$ to the minimizer of \eqref{e:linear} with maximal entropy \cite{benamou2015iterative}. We will call this algorithm for approximating the solution of the linear program \eqref{e:linear} the \emph{regularization method}. This approach encounters two difficulties:
\begin{enumerate}
	\item  \textbf{Underflow/overflow} occurs for large values of $\beta$.  
	\item  \textbf{Slow convergence rate} of the Bregman iterations for large values of $\beta$. This  phenomenon can be explained by the fact that Sinkhorn's algorithm can find an $\epsilon$ approximate solution in $O(n^2 \log n \epsilon^{-3}) $ time  \cite{altschuler2017near}. Thus for a fixed error rate of $\epsilon$ fast approximation by Sinkhorn's algorithm is possible, but the rate of convergence grows polynomially in $\epsilon^{-1}$ instead of logarithmically as in the case of interior point algorithms. As a result taking $\epsilon$ to be very small can lead to very long computations. 
	\end{enumerate}
 The underflow/overflow encountered for large values of $\beta$, and methods to overcome it, can be understood by considering the KKT equations of \eqref{e:regularized}: Using the fact that the unique minimum of \eqref{e:regularized} can be shown to be strictly positive, the KKT conditions amount to solving the  following equations for $v,\lambda$: 
	\begin{subequations}\label{e:KKT}
	\begin{align}
	v&=u_0 \cdot \exp(-\beta c)\cdot \exp(-A^T \lambda) \label{e:unstable}\\
	Av&=b
	\end{align}
	\end{subequations}
As $\beta$ increases the entries of $\exp(-\beta c)$ become very large numbers or very close to zero (depending on the sign of the entries), which leads to numerical overflow/underflow. 

One natural approach (which we will not use) for approximating the solution of linear programs by KL-projections which avoids underflow/overflow is the  \emph{proximal method} approach \cite{censor1992proximal,chen1993convergence}: This method proposes an iterative process beginning  with some initial guess $v_0>0$ and then solving 
\begin{equation} \label{e:PMD}
v_{k+1}=\underset{v \in \P}{\mathrm{argmin}} \, c^Tv+\beta_0^{-1} KL(v|v_k).
 \end{equation}
The advantage of this algorithm over the previous one is that it converges as $k \rightarrow \infty$ even when $\beta=\beta_0$ is held fixed and as a result the coefficients of the KKT equation
 of \eqref{e:PMD} do not explode or vanish.  On the other hand, this algorithm requires solving multiple KL-projection problems in order to converge. In our experiments in the context of the linear relaxations for the QAP  this method required many hundreds of KL-projection onto the JAP to converge (we call these iterations external iterations). As each KL projection onto the JAP polytope  typically requires several hundered (closed form) projections onto OLPs (we call these projections internal iterations), this algorithm becomes rather slow.  
 
 Accordingly we will propose a new method for approximating linear programs by KL-projections which will only require a small number of external iterations in order to converge, and still avoids underflow/overflow. The inspiration for this method comes from the following observation on the relation between the proximal method and the regularization method: 
 \begin{lemma}\label{lem:slower}
 If the proximal method and regularization method  are  initialized from the same point $(u_0=v_0 )$ then the solution obtained from the proximal method  with fixed $\beta_0$ after $k$ iterations, is equal to the solution of the regularization method when choosing $\beta=k\beta_0$ (that is $v_k=v_{k\beta_0}^* $). 
 \end{lemma}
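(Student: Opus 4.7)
The plan is to characterize both iterates as KL-projections onto $\P$ and then use a simple invariance of such projections to collapse $k$ proximal updates into a single regularization solve at $\beta = k\beta_0$.

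First, I would mirror the algebraic rewriting used in \eqref{e:regularized} and express each proximal step as a KL-projection:
$$v_{k+1} = \argmin_{v \in \P} KL\!\left(v \,\big\vert\, v_k \cdot \exp(-\beta_0 c)\right),$$
so that, just like the regularization method, each proximal update is nothing but the KL-projection onto $\P$ of a specific positive vector. Since $\P$ is strictly feasible and KL acts as a barrier for the constraint $v \geq 0$, the KKT conditions of any problem $\min_{v\in\P} KL(v\vert z)$ with $z>0$ force the minimizer to have the form $v = z\cdot\exp(-A^T\lambda)$ for some Lagrange multiplier $\lambda$, and by strict convexity of KL this point is the unique minimizer.

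The key lemma I would isolate is the following invariance: for any vector $\mu$ and any positive $z$,
$$\Pi_\P^{KL}\!\left(z\cdot\exp(-A^T\mu)\right) = \Pi_\P^{KL}(z),$$
where $\Pi_\P^{KL}$ denotes KL-projection onto $\P$. The justification is short: if $v^* = z\cdot\exp(-A^T\lambda^*)$ is the KL-projection of $z$, then $v^* = \left(z\cdot\exp(-A^T\mu)\right)\cdot\exp(-A^T(\lambda^*-\mu))$ is of the KKT form for the projection of $z\cdot\exp(-A^T\mu)$, still satisfies $Av^*=b$, and hence must equal the (unique) projection of $z\cdot\exp(-A^T\mu)$.

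Given this invariance, the lemma follows by a telescoping induction on $k$. The base case $v_1 = v_{\beta_0}^*$ is immediate from $v_0=u_0$ together with the rewriting. For the inductive step, assume $v_k = v_{k\beta_0}^*$; the KKT characterization gives $v_k = u_0\cdot\exp(-k\beta_0 c)\cdot\exp(-A^T\lambda_k)$ for some $\lambda_k$, hence $v_k\cdot\exp(-\beta_0 c) = u_0\cdot\exp(-(k+1)\beta_0 c)\cdot\exp(-A^T\lambda_k)$, and the invariance yields $v_{k+1} = \Pi_\P^{KL}\!\left(u_0\cdot\exp(-(k+1)\beta_0 c)\right) = v_{(k+1)\beta_0}^*$, completing the induction. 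The only delicate step is stating the KKT-based invariance cleanly (ensuring strict positivity of the minimizer so that no positivity multipliers enter); after that, the argument is a routine exponent bookkeeping.
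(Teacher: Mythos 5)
Your proposal is correct and follows essentially the same route as the paper: an induction on $k$ that uses the KKT characterization $v = u_0\cdot\exp(-\beta c)\cdot\exp(-A^T\lambda)$ to absorb the multiplier factor $\exp(-A^T\lambda_k)$ and collapse the proximal step into a single regularized solve at $\beta=(k+1)\beta_0$. Isolating the invariance $\Pi_\P^{KL}(z\cdot\exp(-A^T\mu))=\Pi_\P^{KL}(z)$ as a separate statement is a cleaner packaging of exactly the manipulation the paper performs inline in \eqref{e:two_formulations}, but it is not a different argument.
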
   
 \begin{proof}
 By induction on $k$. For $k=1$ the claim is obvious since in this case the equation \eqref{e:PMD} determining $v_{\beta_0}^*$  and the equation \eqref{e:regularized} determining $v_1 $ are identical. Now assume correctness for $k$, then according to \eqref{e:unstable} we have for some $\lambda_k$, 
 $$v_k=v_{k \beta_0}^*=v_0 \cdot \exp(-k\beta_0 c)\cdot \exp(-A^T \lambda_k)  $$
 By replacing $u_0 $ and $\beta$ in \eqref{e:regularized} with $v_k$ and $\beta_0$ we obtain that the KKT equations for \eqref{e:PMD} are of the form 
 \begin{subequations}
 	\begin{align}
 	v&=v_k \cdot \exp(-\beta_0 c)\cdot \exp(A^T \lambda)=u_0 \cdot \exp(-(k+1)\beta_0 c) \cdot \exp(A^T(\lambda+\lambda_k)) \label{e:two_formulations} \\
 	Av&=b
 	\end{align}
 	\end{subequations}
 	and thus the solution $v_{k+1} $ to this equation is identical to the solution of \eqref{e:KKT} with $\beta=(k+1)\beta_0 $. 
 \end{proof}
The lemma shows that the proximal method  can be interpreted as a method for improving the conditioning of the KKT equations of \eqref{e:regularized} for large values of $\beta$ by using solutions for smaller values of $\beta$ (\ie, solving $k$ iterations with $\beta_0$ is equivalent to solving one iteration with $k\beta_0\gg \beta_0$). The proof suggests other methods for exploiting solutions for small values of $\beta$ in order to solve \eqref{e:regularized} for large values of $\beta$. For example given $v_1,\ldots, v_k$, we can define $u_0$ to be
\begin{equation}
\label{e:square}
u_0=v_k \cdot v_k.
\end{equation}
Another possible choice, which is the choice we use in practice, is
\begin{equation} \label{e:last}
u_0=v_k\cdot v_{k-1}\cdot \ldots \cdot v_0 . 
\end{equation} 
We then have
\begin{lemma}\label{lem:faster}
	Let $v_k$ be defined as in \eqref{e:square} or \eqref{e:last}, If $u_0=v_0=\bbone$ then $v_k=v_{2^{k-1}\beta_0}^* $. 
\end{lemma}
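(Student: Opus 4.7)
I will prove the lemma by induction on $k$, following the same strategy as the proof of Lemma~\ref{lem:slower}. The central observation is that, for each parameter $\beta$, the KKT system \eqref{e:KKT} characterizes $v_\beta^*$ uniquely: it is the unique positive vector of the form $u_0 \cdot \exp(-\beta c) \cdot \exp(-A^T\lambda)$ satisfying $Av = b$. Hence it suffices to exhibit $v_k$ in this form with $\beta = 2^{k-1}\beta_0$ and $u_0 = \bbone$; feasibility is automatic since $v_k \in \P$.

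For the base case $k=1$, both definitions of $u_0$ reduce to $v_0 \cdot v_0 = \bbone$, so $v_1$ is the standard regularized solution at $\beta_0$, that is, $v_1 = v^*_{\beta_0} = v^*_{2^0\beta_0}$. For the inductive step I assume the claim up to index $k$; then by \eqref{e:unstable},
$$v_j = \exp(-2^{j-1}\beta_0 c)\cdot\exp(-A^T\mu_j),\qquad 1\le j\le k,$$
for some dual vectors $\mu_j$. I now substitute these expressions into the new prior $u_0$ prescribed by \eqref{e:square} or \eqref{e:last} and invoke the KKT characterization of $v_{k+1}$ to read off its form.

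For \eqref{e:square}, the squared prior
$$v_k\cdot v_k = \exp(-2^k\beta_0 c)\cdot \exp(-2A^T\mu_k)$$
already carries exponent $2^k\beta_0$ in front of $c$, so the KL-projection step yields $v_{k+1}$ in canonical KKT form with this same exponent. For \eqref{e:last}, the telescoping product
$$v_k\cdot v_{k-1}\cdots v_0 = \exp\bigl(-(2^k-1)\beta_0 c\bigr)\cdot\exp\Bigl(-A^T\textstyle\sum_{j=1}^k\mu_j\Bigr)$$
carries exponent $(2^k-1)\beta_0$, and the additional $\exp(-\beta_0 c)$ supplied by the regularization step brings the total to $2^k\beta_0$. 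In either case $v_{k+1}$ has the canonical KKT form with $\beta = 2^k\beta_0$ and $u_0 = \bbone$, so by the uniqueness noted above $v_{k+1}$ coincides with $v^*_{2^k\beta_0}$, closing the induction.

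The main difficulty I anticipate is not conceptual but notational: the two update rules absorb the cost vector $c$ differently into the prior $u_0$, so one must be careful about exactly which regularized problem defines $v_{k+1}$ in each scheme, so that the exponents telescope to precisely $2^k\beta_0$ and not $2^k\pm 1$ times $\beta_0$. Apart from this bookkeeping, the argument reduces to routine dual-variable tracking in the KKT system, entirely analogous to the proof of Lemma~\ref{lem:slower}.
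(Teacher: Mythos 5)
Your proof is correct and follows essentially the same route as the paper's: induction on $k$, reading off the form of $v_{k+1}$ from the KKT system \eqref{e:KKT} and telescoping the exponents $1+\sum_{j=1}^{k}2^{j-1}=2^{k}$. The only point worth flagging is the \eqref{e:square} case, which the paper's own proof silently omits (it covers only \eqref{e:last}): as you yourself note, the claimed exponent $2^{k}\beta_0$ comes out only if the squared prior is followed by a pure KL-projection rather than another application of \eqref{e:regularized} (which would instead give $(2^{k}+1)\beta_0$), and your resolution of that ambiguity is the one under which the lemma is true.
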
  
Thus, as in the previous algorithm, the proposed algorithm computes a solution $v_k$ which is in fact identical to $v_{\beta(k)}^* $ for some monotonely increasing function $\beta$. In the proposed algorithm $\beta(k)$ grows exponentially, while in the previous algorithm $\beta(k)$ grew linearly. As a result we can obtain a high-quality solution for the JA relaxation using only a small number of external iterations (around 15 in experiments we performed). 
\begin{proof}
We prove the lemma for the update rule defined in \eqref{e:last}. The proof is similar to the proof of the previous lemma.  For $k=1$ it is clear that $v_1$ and $v_{\beta_0}^* $ solve the same equation and hence are equal. Now we assume correctness of the claim for all $j \leq k $ and prove it for $k+1$. By assumption and \eqref{e:unstable}, for all $j \leq k$  there is a vector $\lambda_j$ so that  
$$v_j=\bbone \cdot \exp(-2^{j-1}\beta_0 c)\cdot \exp(-A^T \lambda_j)  $$
and therefore the KKT equations obtained by replacing $u_0 $ with $v_k$ are of the form \begin{subequations}
	\begin{align}
	v&= \exp(-\beta_0(1+\sum_{j=1}^k 2^{j-1}) c) \cdot \exp(A^T(\lambda+\sum_{j=1}^k\lambda_j)) \\
	Av&=b
	\end{align}
\end{subequations}
Thus the solution $v_{k+1}$ to this equation is identical to $v_{2^{k}\beta_0}^*$.
\end{proof}

To summarize, we state our full algorithm for computing the lifted linear relaxations of the QAP: We set $\beta=1$, and $u_0=(x_0,y_0)=(\bbone,\bbone) $. We then solve \eqref{e:regularized} using alternating Bregman projections onto the lifted polytopes (OLP) as described in Section~\ref{sec:KLontoLifted} and denote the solution by $v_1=(x_1,y_1)$. In general, we obtain $v_{k+1}$  by solving \eqref{e:regularized} with $\beta=1$ and $u_0=v_0 \cdot v_1 \cdot \ldots \cdot v_k $. In each external iteration we preform alternating Bregman projections until we recieve a solution $v_k$ which satisfies all the constraints up to a maximal error of $\epsilon$. We preform external iterations until the normalized difference between the energy of $v_{k+1}$ and $v_k$ is smaller than $\epsilon$. In our experiments we use $\epsilon=10^{-2} $.

%------------------------------------------------
\section{Results}
% !TEX root = main_sinkhorn_lifted.tex
\paragraph{Comparison with interior point solvers}
\begin{wrapfigure}[12]{r}{0.3\columnwidth}
	\vspace{-1em}
	\includegraphics[width=0.3\columnwidth]{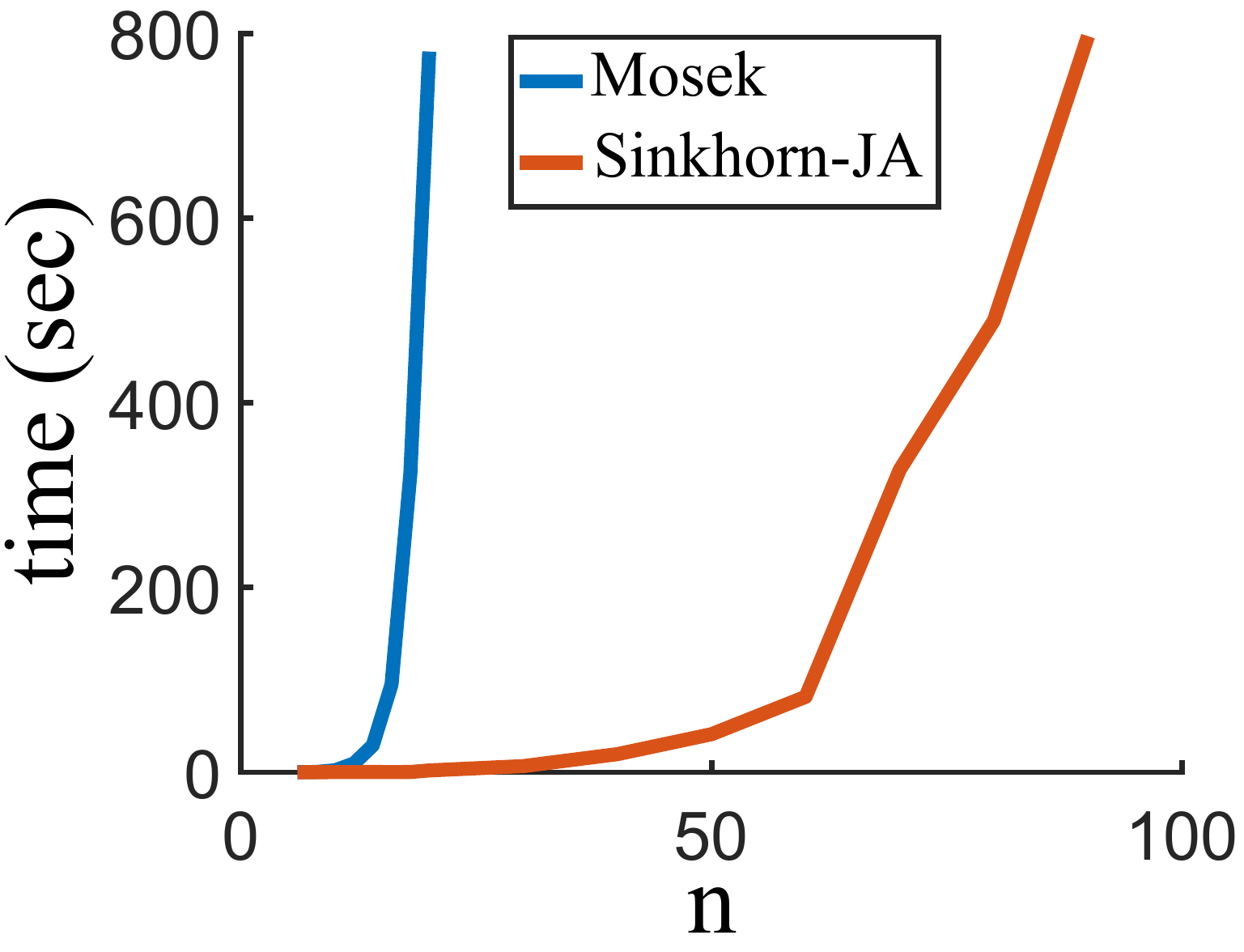}{}
	\vspace{-2em}
	\caption{\small Timing comparsion of the Sinkhorn-JA algorithm with the Mosek interior point solver.}
	\label{fig:timing}	
\end{wrapfigure}
We compare the timing of our algorithm for solving the JA algorithm (denoted by Sinkhorn-JA) with Mosek \cite{andersenmosek} which is a popular commercial interior point solver. We ran both algorithms on randomly generated quadratic assignment problems, with varying values of $n$ until they required more than ten minutes to solve the relaxations. Both algorithms were run with a single thread implementation on a Intel Xeon CPU with two 2.40 GHz processors. As can be seen in Figure~\ref{fig:timing} solving the JA relaxation with $n=20$ using Mosek takes over ten minutes. In a similar time frame we can approximately solve the JA relaxation  for problems with $n=90 $.

\paragraph{Quadratic assignment}
We evaluate our algorithm's performance for the JA relaxation using QAPLIB \cite{burkard1997qaplib}, an online library containing several data sets of quadratic assignment problems, and   provides the best known lower  and upper bounds for each problem. Many of the problems have been solved to optimality, in which case the lower bound and upper bound are equal.

In figure~\ref{fig:qaplib} we compare the upper and lower bound obtained from the proposed algorithm, with the lower and upper bound in QAPLIB. The energy of our solution for the JA relaxation provides us with a lower bound for the QAP. We obtain an upper bound by projecting the solution $x$ we obtain from the algorithm onto the set of permutations using the projection procedure of \cite{maron2018concave}  and computing its energy.  As can be seen in Figure~\ref{fig:qaplib}, for the \textbf{bur}, and \textbf{chr} datasets we achieve a very tight lower bound (3 digits of accuracy) and for the \textbf{lipa} dataset we achieve accurate solutions for the entire set. In total we achieve 19 accurate solutions (zero energy gap), and 36 lower bounds, with up to 2 digits of accuracy. For the rest of QAPLIB we achieve reasonable results. We note that the QAPLIB bounds were achieved using a rather large collection of different algorithms that are typically far slower than our own and have worst case exponential time complexity.

\begin{figure}
       \includegraphics[width=\columnwidth]{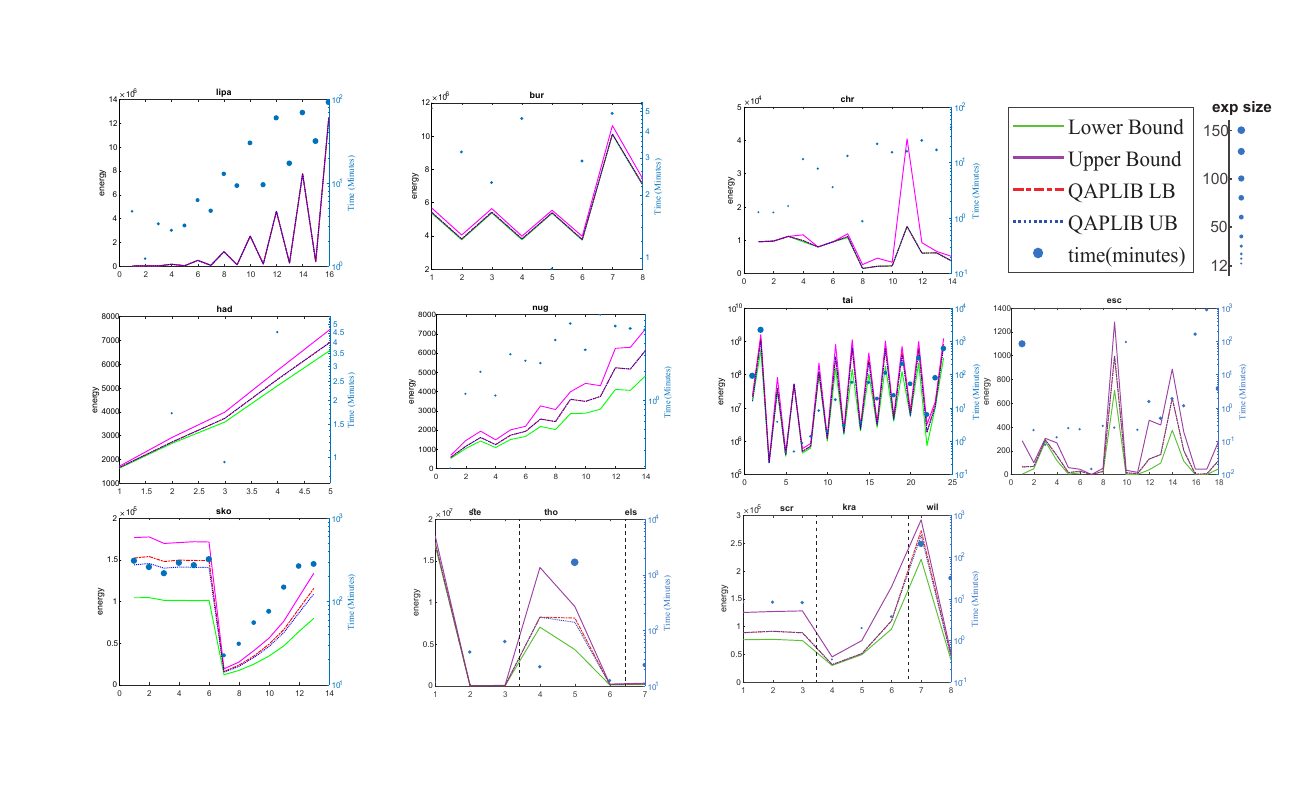}{}
       \vspace{-4em}
       \caption{\small Comparison of the upper bounds and lower bounds obtained from the Sinkhorn-JA algorithm with the best known upper bounds and lower bounds in the QAPLIB library.   }
       \label{fig:qaplib}
\end{figure}

\paragraph{Anatomical datasets} We applied our approach for the task of classification of anatomical surfaces. We considered three datasets, consisting of three different primate bone types \cite{boyer2011algorithms}:  (A) 116 molar teeth, (B) 61 metatarsal bones (C) 45 radius bones.   On each surface we sampled 400 points using farthest point sampling. We first found a correspondence map for the first 50 points using our algorithm, and then we used this result  as an initialization to \cite{maron2018concave} in order to achieve correspondences for all 400 points.  Finally we use the computed correspondences and calculate the Procrustes distance \cite{schonemann1966generalized} between each pair of shapes as a dissimilarity measure. A representative   example is shown in the inset. Note that in this case the teeth are related by an  orientation reversing map which our pipeline recovered.
\begin{wraptable}[11]{r}{0.4\columnwidth}
	\vspace{-1em}		
	\includegraphics[width=0.4\columnwidth]{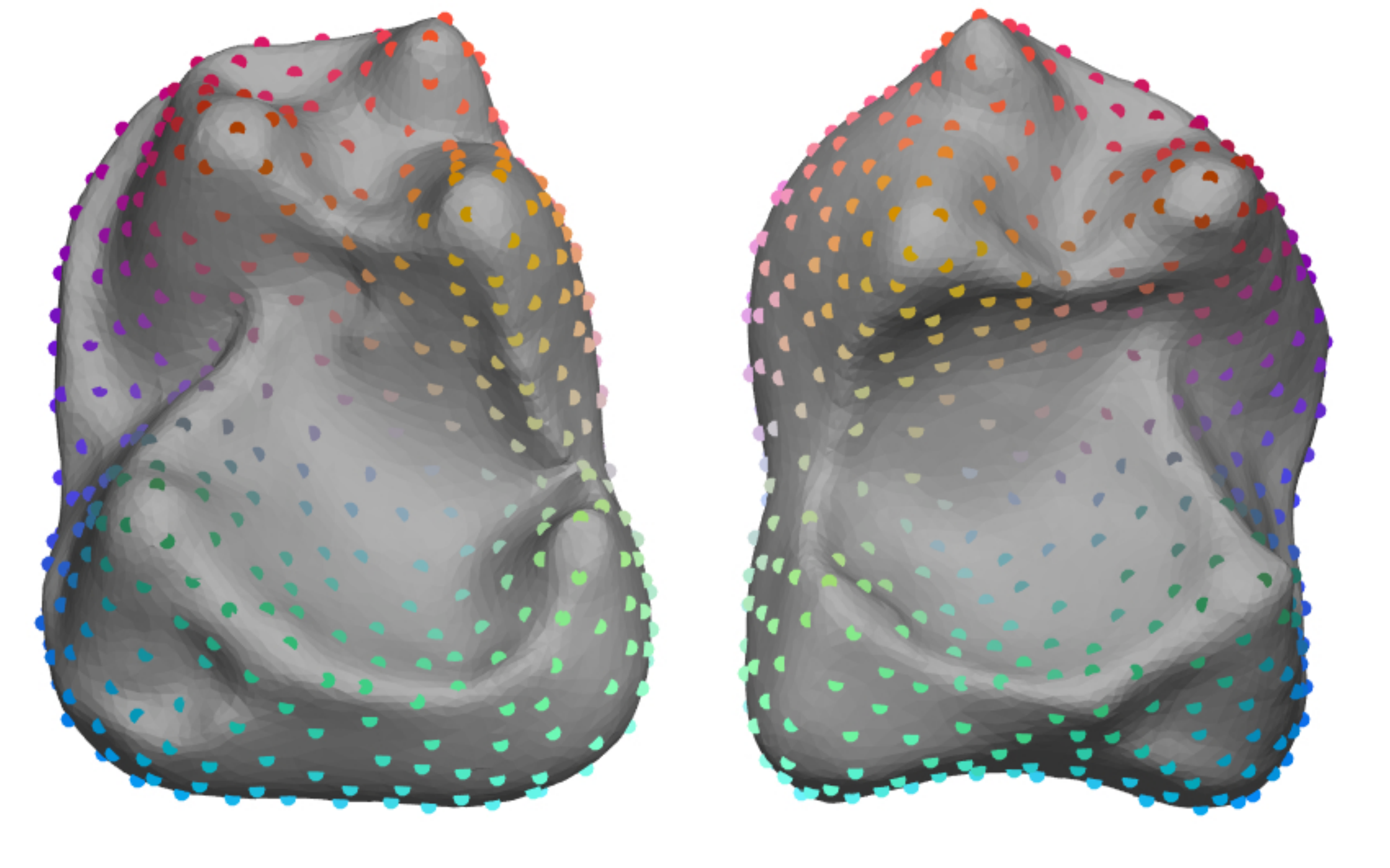}{}	
\end{wraptable}  To evaluate our algorithm, we calculate the dissimilarity measure for every two meshes in a set and use a "leave one out" strategy: each bone is assigned to the taxonomic group of its nearest neighbor among all other bones. The table below shows successful classification rates (in \%) for the three bone types and three different classification queries.
 For the initial 50 points matching we note that our algorithm is very accurate:  the normalized gap ($\frac{\text{energy} - \text{projected energy}} {\text{energy}}$) is less than 0.01 for $90\%$ of the cases. We compared our method with the  convex relaxation method of \cite{maron2016point} and the performance of human experts as reported in \cite{boyer2011algorithms}. Note that our algorithm achieves state of the art results on all but one experiment. We also compared our method with an alternative baseline method where we  match the 400 points using \cite{maron2018concave} initialized with $\frac{1}{n}\bbone\bbone^T$ and found that our algorithm achieves significantly more accurate results.

%\pagebreak

%\begin{figure}[t]
%	\includegraphics[width=\columnwidth]{Teeth_dist.pdf}{}
%	\caption{\small Teeth distace matrix.} \vspace{-10pt}
%	\includegraphics[width=\columnwidth]{mt1_Dist.pdf}{}
%	\caption{\small distace matrix of mt1.} \vspace{-10pt}
%	\label{fig:Dist}
%\end{figure}

\begin{tabular}{ |p{2cm}||p{3cm}||p{1cm}|p{1cm}|p{1cm}|p{1cm}||p{1cm}||p{1cm}|  }
	\hline
	\multicolumn{8}{|c|}{data sets} \\
	\hline
	classification & \multicolumn{1}{|c|} {algorithm} & \multicolumn{2}{|c|} {Teeth}& \multicolumn{2}{|c|}{1st Metatarsal } & \multicolumn{2}{|c|}{Radius}\\
	\hline
	- &- & No. & \% & No. &\% &No. &\% \\
	\hline
    \multirow{2}{*}{genera} &Sinkhorn-JA    &  99   & \textbf{93.9}  & 59 & 83.0 & 45 &\textbf{84.44} \\
							& PM-SDP &  99  & 91.9  & 59 & 76.6 &45 & 82.44  \\
    						& Human-expert &  99  & 91.9  & 59 & \textbf{88.1} &45 & 77.8  \\
	\hline
	\multirow{2}{*}{Family}	&Sinkhorn-JA     & 106  & 94.3  & 61 & 93.4 &\multicolumn{2}{|c|}{-}\\
						    &PM-SDP & 106  & 94.3  & 61 & 93.4 &\multicolumn{2}{|c|}{-}\\
						    &Human-expert & 106  & 94.3  & 61 & 93.4 &\multicolumn{2}{|c|}{-}\\
						    
	\hline
	\multirow{2}{*}{Above Family} &Sinkhorn-JA     & 116  & \textbf{99.1}  & 61 & 100   &\multicolumn{2}{|c|}{-}\\
					        &PM-SDP & 116  & 98.2  & 61 & 100   &\multicolumn{2}{|c|}{-}\\
					        &Human-expert & 116  & 95.7  & 61 & 100   &\multicolumn{2}{|c|}{-}\\				
	\hline
\end{tabular}

\section{Conclusion}
In this paper, we suggested a new algorithm for approximately solving the JA relaxation, by generalizing the Sinkhorn algorithm for higher dimensional polytopes. This algorithm is significantly more scalable than standard solvers, and as a result the high quality solutions often obtained by the JA relaxations are made available for problems of non-trivial size. 

The main drawback of our algorithm is the fact that we only approximate the optimal solution, but as we demostrate in the results section, it nevertheless achieves state of the art performance on various tasks. We believe that other lifted relaxations can benefit from such Sinkhorn-like  solvers and leave it a possible future work direction.  
 \section*{Acknowledgments}
This research was supported in part by the European Research Council (ERC Consolidator Grant, "LiftMatch" 771136) and the Israel Science Foundation (Grant No. 1830/17). 
%\end{enumerate}

\bibliographystyle{unsrt}
%\nocite{*}
\bibliography{nipsBib}

%%%%%%%%%%%%%%%%%%%%%%%%%%%%%%%%%%%%%%%%%%%%%%%%%%%%%%%%%%%%%%%%%%%%%%%%%%%%%%%%%%%%

%%%%%%%%%%%%%%%%%%%%%%%%%%%%%%%%%%%%               appendix              %%%%%%%%%%%%%%%%%%%%%%%%%%%%%%%%%%%%

\end{document}